\documentclass[10pt]{amsart}
\usepackage{amssymb,amstext,amsmath,amscd,amsthm,amsfonts,enumerate,latexsym,stmaryrd,multicol,geometry,graphicx,mathrsfs,bm}
\usepackage[usenames]{color}
\usepackage[all]{xy}
\newcommand{\xdim}[1]{#1\text{-}\dim}
\newtheorem{thm}{Theorem}[section]
\newtheorem{lem}[thm]{Lemma}

\newtheorem{cor}[thm]{Corollary}
\theoremstyle{definition}
\newtheorem{dfn}[thm]{Definition}

\newtheorem{rem}[thm]{Remark}

\theoremstyle{remark}

\newtheorem*{claim*}{Claim}
\newtheorem*{ac}{Acknowledgments}
\newtheorem*{conv}{Convention}

\numberwithin{equation}{thm}
\def\A{\mathcal{A}}
\def\add{\operatorname{\mathsf{add}}}
\def\C{\mathcal{C}}
\def\cm{\mathsf{CM}}
\def\db{\operatorname{\mathsf{D^b}}}
\def\dim{\operatorname{dim}}
\def\End{\operatorname{End}}
\def\Ext{\operatorname{Ext}}
\def\G{\mathsf{G}}
\def\geq{\geqslant}
\def\H{\mathrm{H}}
\def\Hom{\operatorname{Hom}}
\def\inf{\operatorname{inf}}
\def\K{\mathrm{K}}
\def\leq{\leqslant}
\def\level{\operatorname{level}}
\def\Lotimes{\otimes^{\mathbf{L}}}
\def\m{\mathfrak{m}}
\def\mod{\operatorname{\mathsf{mod}}}
\def\P{\mathsf{P}}

\def\perf{\operatorname{\mathsf{perf}}}

\def\Rhom{\mathbf{R}\!\operatorname{Hom}}

\def\sup{\operatorname{sup}}
\def\T{\mathcal{T}}

\def\X{\mathcal{X}}

\def\Z{\mathbb{Z}}
\begin{document}
\title[A characterization of Cohen--Macaulay rings in terms of levels of perfect complexes]{A characterization of Cohen--Macaulay rings\\
in terms of levels of perfect complexes}
\author{Naoya Hiramatsu}
\address[N.H.]{Institute for the Advancement of Higher Education, Okayama University of Science,
1-1 Ridai-cho, Kita-ku, Okayama 700-0005, Japan}
\email{n-hiramatsu@ous.ac.jp} 
\author{Yuki Mifune}
\address[Y.M.]{Graduate School of Mathematics, Nagoya University, Furocho, Chikusaku, Nagoya 464-8602, Japan}
\email{yuki.mifune.c9@math.nagoya-u.ac.jp}
\author{Ryo Takahashi}
\address[R.T.]{Graduate School of Mathematics, Nagoya University, Furocho, Chikusaku, Nagoya 464-8602, Japan}
\email{takahashi@math.nagoya-u.ac.jp}
\urladdr{https://www.math.nagoya-u.ac.jp/~takahashi/}
\thanks{2020 {\em Mathematics Subject Classification.} 13D09, 13H10}
\thanks{{\em Key words and phrases.} derived category, Gorenstein projective module, level, semidualizing module}
\thanks{N.H. was partly supported by JSPS Grant-in-Aid for Scientific Research 21K03213 and 25K06966. Y.M. was partly supported by Grant-in-Aid for JSPS Fellows 25KJ1386. R.T. was partly supported by JSPS Grant-in-Aid for Scientific Research 23K03070}
\begin{abstract}
Let $R$ be a commutative noetherian ring, and let $C$ be a semidualizing $R$-module.
In this paper, we study levels of bounded complexes of finitely generated $R$-modules with respect to the full subcategory $\mathsf{G}_{C}(R)$ consisting of Gorenstein $C$-projective $R$-modules.
Our main result provides a characterization of the Cohen--Macaulayness of $R$ in terms of the finiteness of levels of perfect complexes with respect to $\mathsf{G}_{C}(R)$.
This recovers a recent theorem of Christensen, Kekkou, Lyle and Soto Levins on the Gorensteinness of $R$.
\end{abstract}
\maketitle
\section{Introduction}
The notion of levels of objects in a triangulated category has been introduced by Avramov, Buchweitz, Iyengar and Miller \cite{ABIM}.
A level measures the number of extensions required to build an object out of a given full subcategory of the triangulated category. 
This notion is closely related to the notion of strong generation in triangulated categories; see \cite{AT2015,AAITY,BV,J.C,DGI,Rou} for instance.
A natural setting for studying levels is the bounded derived category of an abelian category together with a full subcategory of the abelian category. 
To be more precise, for an abelian category $\A$, a full subcategory $\X$ of $\A$, and an object $M$ in the bounded derived category $\db(\A)$, one considers the level of $M$ with respect to $\X$, which is denoted by $\level_{\db(\A)}^{\X}M$. 
This problem has been studied by many authors.
Upper bounds for levels in terms of homological dimensions are given in \cite[Lemma 2.5.2]{ABIM} and \cite[Corollary 2.8]{AM} for instance.
Uniform bounds for levels were obtained in \cite[Theorem 4.1]{AAITY} in a general setting; see also Lemma \ref{AAITY_4.1}.
Lower bounds have been studied in \cite{AT2019,AGMSV,AM,M}.

For a commutative noetherian ring $R$, we denote by $\mod R$ the category of finitely generated $R$-modules, by $\P(R)$ the full subcategory of $\mod R$ consisting of projective $R$-modules, and by $\db(R)$ the bounded derived category of $\mod R$.
Avramov, Buchweitz, Iyengar and Miller \cite{ABIM} showed that if $R$ is a regular ring of finite Krull dimension $d$, then the level of every complex in $\db(R)$ with respect to $\P(R)$ is at most $d+1$. 
Conversely, if the levels of perfect complexes with respect to $\P(R)$ are uniformly bounded, then the ring $R$ is regular; see \cite[Proposition 7.25]{Rou}.
Motivated by this observation, Christensen, Kekkou, Lyle and Soto Levins \cite{CKLS} characterized Gorenstein rings in terms of levels of perfect complexes with respect to $\G(R)$, as follows.
Here, $\G(R)$ denotes the full subcategory of $\mod R$ consisting of Gorenstein projective $R$-modules, and $\perf(R)$ stands for the full subcategory of $\db(R)$ consisting of perfect $R$-complexes.
\begin{thm}[Christensen--Kekkou--Lyle--Soto Levins]\label{1}
Let $R$ be a commutative noetherian ring. Then the following two conditions are equivalent to each other.
\begin{enumerate}[\rm(1)]
\item
The ring $R$ is Gorenstein and $\dim R$ is finite.
\item
The number $l=\sup\{\level_{\db(R)}^{\G(R)}P\mid P\in\perf(R)\}$ is finite.
\end{enumerate}
When these equivalent conditions hold, one has an inequality $\dim R\leq l-1$.
\end{thm}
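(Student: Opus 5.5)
We prove the two implications separately and read off the dimension bound from the argument for (2)$\Rightarrow$(1).

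\textbf{(1)$\Rightarrow$(2).} Assume $R$ is Gorenstein with $d=\dim R<\infty$. Then every finitely generated $R$-module $M$ has $\operatorname{G-dim}M\le d$; indeed, the $d$-th syzygy of $M$ is maximal Cohen--Macaulay, hence Gorenstein projective. Given $P\in\perf(R)$, we build it from $\G(R)$ using the standard truncation argument of \cite[Lemma 2.5.2]{ABIM}, with $\G(R)$ in place of $\P(R)$. Shift $P$ so that it is a bounded complex of projectives. Take a Gorenstein projective resolution of each homology module, or, more efficiently, apply the approximation/uniform-bound result recorded as Lemma \ref{AAITY_4.1}. The conclusion is that every object of $\db(R)$ has $\G(R)$-level at most $d+1$. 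A concrete way to see this is to replace $P$ by a quasi-isomorphic complex concentrated in an interval of length $d$ whose terms lie in $\G(R)$. This is done by truncating brutally below at a high syzygy, which is in $\G(R)$ because $\operatorname{G-dim}\le d$ for all modules. Hence $l\le d+1<\infty$.

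\textbf{(2)$\Rightarrow$(1).} Suppose $l<\infty$. The first step is to reduce to the local case. Levels do not increase under localization, and $\G(R)_\mathfrak p\subseteq \G(R_\mathfrak p)$, so the sup for $R_\mathfrak p$ is at most $l$. It therefore suffices to show that each $R_\mathfrak p$ is Gorenstein with $\dim R_\mathfrak p\le l-1$. Now let $(R,\m,k)$ be local and take $P=K$, the Koszul complex on a minimal generating set of $\m$ (or on a system of parameters). The key tool is a ghost lemma/lower bound argument. Consider the functor $\Ext^{*}_R(-,R)$, or rather $\Rhom_R(-,R)$. For $G\in\G(R)$ one has $\Ext^{>0}(G,R)=0$. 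Consequently, if $\level^{\G(R)}K\le l$, then the natural maps detecting $\Ext^{i}$ in degrees spread over more than $l$ consecutive positions must compose to zero. From this we aim to show two things: that $\operatorname{G-dim}$ of the homology, and in fact of $k$, is finite, and that $\dim R\le l-1$. More precisely, a complex of $\G$-level $\le l$ has finite G-dimension in the derived sense, since $\G$-dimension is finite on $\G(R)$ and the class of complexes of finite G-dimension is thick. Moreover its $\Rhom(-,R)$ has amplitude bounded by roughly $l-1$ plus its homological span. Applying this to the Koszul complex on a system of parameters $\underline x$ of length $d$, we get that $K(\underline x)$ has finite G-dimension. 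Since $K(\underline x)$ is perfect this is automatic, so it does not yet give Gorensteinness.

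To get Gorensteinness, I would use the Koszul complex to build $k$ up to thick closure. Note that $\level^{\G}$ finiteness for all perfect complexes, combined with a ``uniform'' bound, should force finiteness on complexes in the thick closure obtained by homotopy colimits. Concretely, consider the perfect complexes $K(x_1^n,\dots,x_d^n)$. By Auslander--Bridger type arguments, a module $M=R/(\underline x^n)$ that is a direct summand up to bounded-level extensions of $\G$-modules has $\operatorname{G-dim}\le l-1$. For a general module $N$ I would use the uniformity in $n$, together with the fact that the syzygy $\Omega^{d}$ of $k$ appears as a summand of a syzygy of $R/(\underline x^n)$ for large $n$ (a Dutta/Takahashi-type result), to conclude $\operatorname{G-dim}k<\infty$. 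Hence $R$ is Gorenstein, and then $d=\operatorname{depth}R=\operatorname{G-dim}R/(\underline x)\le l-1$.

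\textbf{Main obstacle.} The main obstacle is passing from uniform level bounds on perfect complexes to finite G-dimension of $k$. What I would try first is the syzygy-summand trick for Koszul complexes on powers of a parameter ideal, combined with the ghost lemma bounding the $\Ext^{*}(-,R)$-nonvanishing length by $l$.
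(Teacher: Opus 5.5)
\textbf{The gap.} The missing step is the one you flag yourself: showing that a local ring $R$ with $\perf(R)\subseteq\langle\G(R)\rangle_l^{\db(R)}$ is Gorenstein. None of the mechanisms you list delivers this.

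A level bound on $\K(\bm{x}^n,R)$ says something about $R/(\bm{x}^n)$ only when $\bm{x}$ is $R$-regular, i.e.\ only when $R$ is already known to be Cohen--Macaulay. Otherwise the Koszul complex has several homology modules, and its level controls none of their G-dimensions.

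The syzygy-summand statement cannot hold as you state it. If $R$ is Cohen--Macaulay but not regular (for instance Gorenstein, where (2) is true), then $R/(\bm{x}^n)$ has finite projective dimension. Hence so does every direct summand of each of its syzygies, while $\Omega^d k$ does not.

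The thick closure of $\perf(R)$ is $\perf(R)$ itself, which contains $k$ only when $R$ is regular. Homotopy colimits take you out of $\db(R)$, and you give no mechanism for carrying the level bound along.

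Finally, the ghost-lemma argument that settles the analogous question for $\P(R)$ does not transfer. There the maps $\Omega^iM\to\Omega^{i+1}M[1]$ are ghosts, so a bound $l$ on the levels of brutal truncations of a free resolution of $M$ forces $M\to\Omega^lM[l]$ to vanish, i.e.\ $\operatorname{pd}M<l$. For $G\in\G(R)$, however, the induced map $\Hom_R(G,\Omega^iM)\to\Ext^1_R(G,\Omega^{i+1}M)$ is surjective. The subsequent maps $\Ext^j_R(G,\Omega^{i+j}M)\to\Ext^{j+1}_R(G,\Omega^{i+j+1}M)$, $j\geq 1$, are isomorphisms because $\Ext^{>0}_R(G,R)=0$. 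So these maps and their composites are in general not $\G(R)$-ghosts; for $M=G$ non-free the composite is onto the nonzero stable $\Hom(G,G)$. As you observed, the vanishing of $\Ext^{>0}_R(-,R)$ on $\G(R)$ gives nothing for perfect complexes. An essential ingredient is therefore missing.

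\textbf{How the paper handles it, and the rest of your outline.} The paper does not prove this step from scratch. It takes it from \cite[Theorem 2.6]{CKLS}: uniformly bounded $\G$-levels of perfect complexes over a local ring force Gorensteinness. In the proof of Theorem~\ref{main_thm} this is applied to $R\ltimes C$, using $\perf(R\ltimes C)\subseteq\langle\G(R\ltimes C)\rangle_{2l}$, and combined with Reiten's theorem; for $C=R$ one can apply it to $R$ directly.

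The rest of your outline matches the paper:
\begin{itemize}
\item The reduction to the local case is the same, but you should add that every perfect $R_\m$-complex is, up to isomorphism, the localization of a perfect $R$-complex.
\item The bound $\dim R\leq l-1$ comes, once $R$ is Cohen--Macaulay, from the Koszul complex on a system of parameters together with $\level^{\G(R)}_{\db(R)}M\geq\operatorname{G-dim}M+1$ for modules.
\item For (1)$\Rightarrow$(2), appealing to Lemma~\ref{AAITY_4.1} is the paper's argument.
\end{itemize}

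Two slips in (1)$\Rightarrow$(2), neither of which affects the finiteness of $l$:
\begin{itemize}
\item Lemma~\ref{AAITY_4.1} yields $l\leq\max\{2,d+1\}$, not $l\leq d+1$. Over $R=k[x]/(x^2)$ the complex $R\xrightarrow{x}R$ is not formal (else $k$ would be perfect), so its $\G(R)$-level is $2$.
\item Your ``concrete'' description is wrong. A perfect complex with homology in degrees $0$ and $-m$, $m>d$, is not quasi-isomorphic to a complex concentrated in an interval of length $d$.
\end{itemize}
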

The main result of the present paper is the following theorem, where $\G_C(R)$ denotes the full subcategory of $\mod R$ consisting of Gorenstein $C$-projective $R$-modules.
\begin{thm}[Theorem \ref{main_thm}]\label{main_thm_int}
Let $R$ be a commutative noetherian ring.
Let $C$ be a semidualizing $R$-module.
Then the following two conditions are equivalent to each other.
\begin{enumerate}[\rm(1)]
\item
The ring $R$ is Cohen--Macaulay, $C$ is a canonical module of $R$, and $\dim R$ is finite.
\item
The number $l=\sup\{\level_{\db(R)}^{\G_{C}(R)}P\mid P\in\perf(R)\}$ is finite.
\end{enumerate}
When these equivalent conditions hold, one has an inequality $\dim R\leq l-1$.
\end{thm}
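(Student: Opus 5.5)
For (1)$\Rightarrow$(2), I will use the uniform bound of \cite[Theorem 4.1]{AAITY} (Lemma \ref{AAITY_4.1}), which bounds levels with respect to a resolving-type subcategory $\X$ by a dimension invariant plus one. Suppose $R$ is Cohen--Macaulay with canonical module $C$ and $d=\dim R<\infty$. Then $\G_C(R)$ contains all maximal Cohen--Macaulay modules; this follows from the standard fact that over a Cohen--Macaulay ring with canonical module, MCM modules are exactly the Gorenstein $C$-projectives, checked locally via canonical duality. Every finitely generated module $M$ has an MCM approximation, so its $d$-th syzygy is MCM. For a perfect complex $P$, I will take a truncation argument: choose a bounded projective resolution of $P$ and replace the part beyond $d$ steps by a syzygy, which is MCM. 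This realizes $P$, up to isomorphism in $\db(R)$, as a bounded complex of length $d+1$ with terms in $\G_C(R)$ (projectives lie in $\G_C(R)$ as well, since $R$ is Gorenstein $C$-projective). Hence $\level^{\G_C(R)}P\le d+1$ via the brutal truncation filtration; alternatively, Lemma \ref{AAITY_4.1} gives this directly. So $l\le d+1<\infty$.

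For (2)$\Rightarrow$(1), I will localize and use Koszul complexes. Levels do not increase under localization, and $\G_C(R)_\p\subseteq\G_{C_\p}(R_\p)$. So for each prime $\p$, the levels of perfect $R_\p$-complexes of the form $P_\p$ are bounded by $l$. Take $K$ to be the Koszul complex on a system of generators of $\p R_\p$; it is perfect. I will show that finite $\G_{C}$-level of $K$ forces $C_\p$ to be dualizing. The key tool is the functor $\Rhom(-,C)$. For $G\in\G_C(R)$ one has $\Ext^{>0}(G,C)=0$, and $\Hom(G,C)$ is again Gorenstein $C$-projective, so in particular totally $C$-reflexive. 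Consequently every object of finite $\G_C$-level is $C$-reflexive, and $\Rhom(X,C)$ is bounded. Applying this to $X=K\otimes_R M$ for suitable $M$, or directly to $X=K$, gives $\Rhom(K,C)\simeq K^\vee\otimes C$ bounded. That alone is automatic, so the real content must come from using levels of Koszul complexes on longer sequences. Following \cite{CKLS}, I will use the lower bound: a complex of level $\le n$ with respect to a subcategory closed under syzygy-like operations has homology controlled by $n$. Concretely, I aim to show that the $C$-dual $\Rhom(\Rhom(K,C),C)\simeq K$, combined with the level bound, yields $\operatorname{id}_{R_\p}C_\p<\infty$. Then $C_\p$ is a semidualizing module of finite injective dimension, hence dualizing, so $R_\p$ is Cohen--Macaulay with canonical module $C_\p$.

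To bound the dimension, I will use the lower bound $\level^{\X}K\ge\dim R_\p+1$ for Koszul complexes on a system of parameters, or on generators of $\p$. The route is through the "ghost lemma" argument. Each $G\in\G_C$ satisfies $\Ext^{i}(G,C)=0$ for $i>0$, so the natural maps $\Ext^{i}(-,C)$ compose to zero after $n$ steps. Meanwhile $\Ext^{\text{depth}}$ of $K$ against $C$ is nonzero across a range of length $\dim R_\p+1$. This gives $\dim R_\p\le l-1$ for all $\p$, hence $\dim R\le l-1$ is finite, and $C$ is a canonical module. The main obstacle is establishing that finite level forces $\operatorname{id}C_\p<\infty$ together with the Koszul lower bound. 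I would first try to adapt the \cite{CKLS} argument, replacing $R$ by $C$ throughout: I would look at the Ext-to-$C$ "ghost" maps in $\db(R_\p)$ and test against the residue field $k$. Since $K\otimes k$ has nonzero homology in all degrees $0,\dots,\mu(\p)$, the Bass numbers $\mu^i(\p,C)$ would vanish for $i$ beyond the level.
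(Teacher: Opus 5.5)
Your direction (2)$\Rightarrow$(1) has a genuine gap at exactly the step you call the main obstacle, and the mechanism you sketch cannot work. Since $R\in\G_C(R)$, every perfect complex already has finite $\G_C(R)$-level, at most the number of its nonzero terms. So the finiteness of the level of one Koszul complex on generators of $\mathfrak{p}R_{\mathfrak{p}}$ says nothing about $C_{\mathfrak{p}}$; in particular, no vanishing of the Bass numbers $\mu^i(\mathfrak{p},C)$ can be read off from it. The only content of (2) is a \emph{uniform} bound over infinitely many perfect complexes. You never say how such a bound, for Koszul complexes on longer sequences or any other family, would force $\operatorname{id}C_{\mathfrak{p}}<\infty$.

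The paper does not redo the Christensen--Kekkou--Lyle--Soto Levins argument with $C$ in place of $R$; it reduces to their Gorenstein theorem. After localizing, it shows $P\otimes_RC\cong\Rhom_R(P^*,C)\in\langle\G_C(R)\rangle_l$ for all $P\in\perf(R)$, because $C$-duality is an exact functor preserving $\G_C(R)$. Then set $S=R\ltimes C$ and $I=0\oplus C$. Each $P\in\perf(S)$ sits in a triangle $P\otimes_SI\to P\to P/IP\rightsquigarrow$ with $P/IP\in\perf(R)$ and $P\otimes_SI\cong(P/IP)\otimes_RC$. Restriction of scalars along $S\to R$ carries $\G_C(R)$ into $\G(S)$ (Holm--J\o rgensen), so $\perf(S)\subseteq\langle\G(S)\rangle_{2l}$. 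Hence $S$ is Gorenstein, and Reiten's theorem gives that $R$ is Cohen--Macaulay with canonical module $C$. This idealization step is the missing idea.

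There are two further problems. First, your argument for $\dim R\le l-1$ is not right. Once $R_{\m}$ is Cohen--Macaulay with canonical module $C_{\m}$ and $\bm{x}$ is a system of parameters, $\Ext^i(R_{\m}/(\bm{x}),C_{\m})$ is nonzero only for $i=d$, so there is no nonvanishing range of length $d+1$. Moreover, spread-out homology does not by itself bound level from below: $\bigoplus_{i=0}^{n}R[i]$ has level $1$. The paper instead uses $\K(\bm{x},R)\simeq R/(\bm{x})$, which needs the Cohen--Macaulay property proved first. It combines this with the lower bound $\level^{\G_C(R)}_{\db(R)}M\ge\mathrm{G}_C\operatorname{-dim}_R M+1$ for modules and $\mathrm{G}_C\operatorname{-dim}_R R/(\bm{x})=d$.

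Second, for (1)$\Rightarrow$(2), citing Lemma~\ref{AAITY_4.1}(2) suffices, but your truncation argument and the bound $l\le d+1$ are false. Over $R=k[x]/(x^2)$ with $C=R$ one has $d=0$ and $\G(R)=\mod R$. Yet the perfect complex $R\xrightarrow{x}R$ is not isomorphic to the non-perfect complex $k\oplus k[1]$, so its level is $2$. The correct bound is $\max\{2,d+1\}$.
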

Theorem \ref{main_thm_int} shows that the finiteness of the levels of perfect complexes with respect to $\G_{C}(R)$ characterizes the Cohen--Macaulayness of $R$ with $C$ being a canonical module of $R$.
We should emphasize that Theorem \ref{main_thm_int} is not only a Cohen--Macaulay analog of Theorem \ref{1}, but also recovers Theorem \ref{1} by letting $C=R$.
\begin{conv}
Throughout this paper, we assume that all subcategories are strictly full, and that all complexes are cochain complexes.
Let $R$ be a commutative noetherian ring with identity.
\end{conv}
\section{Proof of the main theorem}
In this section, we recall some basic notions and prove the main result of this paper (Theorem \ref{main_thm}).
We begin with recalling the definition of levels in a triangulated category.
\begin{dfn}
Let $\C$ be an additive category.
Let $\T$ be a triangulated category.
\begin{enumerate}[\rm(1)]
\item
For a subcategory $\X$ of $\C$, we denote by $\add_{\mathcal{C}}\X$ the {\em additive closure} of $\X$ in $\mathcal{C}$, that is, the subcategory of $\mathcal{C}$ consisting of direct summands of finite direct sums of objects in $\X$.
\item
Let $\X$ be a subcategory of $\T$.
We set $\langle\X\rangle_{0}^{\T}=0$.
We denote by ${\langle\X\rangle}_{1}^{\T}$ the additive closure of the subcategory of $\T$ consisting of objects of the form $X[n]$, where $X\in\X$ and $n \in \mathbb{Z}$.
For an integer $r>0$, we inductively define $\langle\X\rangle_{r}^{\T}$ as the subcategory of $\T$ consisting of objects $M$ such that there exists an exact triangle $X\to Z\to Y\to\ $ in $\T$ with $X\in\langle\X\rangle_{r-1}^{\T}$, $Y\in\langle\X\rangle_{1}^{\T}$, and $M$ is a direct summand of $Z$.
\item
For a subcategory $\X$ of $\T$ and an object $M$ in $\T$, we define the $\X$-{\em level} of $M$ in $\T$, denoted by $\level_{\T}^{\X}M$, as the infimum of nonnegative integers $n$ such that $M\in\langle\X\rangle_{n}^{\T}$.
\end{enumerate}
\end{dfn}
We recall the definitions of semidualizing modules and Gorenstein $C$-projective modules.
\begin{dfn}
\begin{enumerate}[\rm(1)]
\item
A finitely generated $R$-module $C$ is said to be {\em semidualizing} provided that the natural homomorphism $R\to\End_{R}(C)$ is an isomorphism and $\Ext^{i}_{R}(C,C)=0$ for all positive integers $i$.
\item
Let $C$ be a semidualizing $R$-module, and let $(-)^\dag$ stand for the $C$-dual functor $\Hom_R(-,C)$.
A finitely generated $R$-module $M$ is called {\em Gorenstein $C$-projective} (or {\em totally $C$-reflexive}) if the natural homomorphism $M\to M^{\dagger\dagger}$ is an isomorphism and $\Ext_{R}^{i}(M\oplus M^{\dagger},C)=0$ for all positive integers $i$.
We denote by $\G_{C}(R)$ the subcategory of $\mod R$ consisting of Gorenstein $C$-projective $R$-modules.
\end{enumerate}
\end{dfn}
\begin{rem}
The ring $R$ is semidualizing as an $R$-module, and the subcategory $\G(R)=\G_{R}(R)$ consists of the finitely generated {\em Gorenstein projective} (or {\em totally reflexive}) $R$-modules.
If $R$ is a Cohen--Macaulay ring with a canonical module $\omega$, then $\omega$ is semidualizing; we adopt \cite[Definition 3.3.16]{BH} for the definition of a canonical module over a (not necessarily local) ring.
The subcategory $\cm(R)=\G_{\omega}(R)$ consists of the maximal Cohen--Macaulay $R$-modules.
\end{rem}
A {\em perfect} complex is by definition a bounded complex of finitely generated projective $R$-modules.
Denote by $\perf(R)$ the subcategory of $\db(R)$ consisting of perfect complexes.
For an $R$-module $C$, denote by $R\ltimes C$ the {\em idealization} of $C$ over $R$, that is, it is equal to $R\oplus C$ as an $R$-module, and has multiplication $(a,x)\cdot(b,y)=(ab,ay+bx)$ for $a,b\in R$ and $x,y\in C$.
For the details of idealizations, refer the reader to \cite[Exercise 3.3.22]{BH}.

A subcategory $\X$ of an abelian category $\A$ with enough projective objects is called {\em resolving} if $\X$ contains the projective objects of $\A$ and is closed under direct summands, extensions, and kernels of epimorphisms.
For an object $M$ in $\db(\A)$, we define the {\em $\X$(-resolution) dimension} of $M$, denoted by $\xdim{\X}M$, as the infimum of integers $n$ such that there exists a complex $X=\left(0\to X^{-n} \to X^{-n+1}\to\cdots\to X^{\sup M}\to0\right)$
with $X\cong M$ in $\db(\A)$ and $X^{i}\in\X$ for all $-n\leq i\leq\sup M:=\sup\{i\in\Z\mid\H^iM\ne0\}$.
By definition, $\xdim{\X}0=-\infty$.

The following lemma is an immediate consequence of \cite[Theorem 4.1]{AAITY} and \cite[Corollary 3.3]{M}.
\begin{lem}\label{AAITY_4.1}
\begin{enumerate}[\rm(1)]
\item
Let $\A$ be an abelian category with enough projective objects.
Let $\X$ be a resolving subcategory of $\A$.
Then for each object $M\in\db(\A)$, one has $\level_{\db(\A)}^{\X}M\leq\sup\{2,\xdim{\X}(\bigoplus_{i\in\mathbf{Z}}\H^{i}M)+1\}$.
\item
Let $R$ be a Cohen--Macaulay ring of finite Krull dimension $d$.
Then one has $\db(R)=\langle\cm(R)\rangle_{\max{\{2,d+1\}}}$.
In particular, there is an inequality $\sup\{\level_{\db(R)}^{\cm(R)}P\mid P\in\perf(R)\}\leq\max\{2,d+1\}$.
\end{enumerate}
\end{lem}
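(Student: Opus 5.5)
Lemma \ref{AAITY_4.1} is described as an immediate consequence of \cite[Theorem 4.1]{AAITY} and \cite[Corollary 3.3]{M}, so I plan to reduce each part to the cited results plus standard facts about Cohen--Macaulay rings.

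For part (1), I will invoke \cite[Theorem 4.1]{AAITY} directly. That result says the following, for a resolving subcategory $\X$ of an abelian category with enough projectives and a bounded complex $M$. If the direct sum of the cohomologies $\bigoplus_i\H^iM$ has $\X$-dimension $n$, then $M$ is built from $\X$ in at most $\max\{2,n+1\}$ steps. The argument I have in mind, in case it needs to be reproduced, runs as follows. First take a resolution of $M$ by objects of $\X$ in each degree, with smart truncations controlling the length. Because $\X$ is resolving, each syzygy-type module appearing has $\X$-dimension at most $n$. Then split the resulting complex with the brutal (stupid) truncation filtration, grouping even and odd degrees so that each layer lies in $\langle\X\rangle_1$. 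This grouping is what produces the bound $n+1$, and it is also what forces the ``$2$'' when $n=0$, since then a complex with modules of $\X$ in several degrees still needs two layers. The convention $\xdim{\X}0=-\infty$ handles $M=0$. I regard this part as a citation.

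For part (2), let $R$ be Cohen--Macaulay of dimension $d$. I first need $\cm(R)$ to be a resolving subcategory of $\mod R$. This holds because maximal Cohen--Macaulay modules are closed under extensions, direct summands and kernels of epimorphisms, by the depth lemma applied locally. This works even without a canonical module, provided we define $\cm(R)$ as the modules that are maximal Cohen--Macaulay at every localization, and we should match that to the paper's $\G_\omega(R)$ when $\omega$ exists. Next, every finitely generated module $N$ satisfies $\xdim{\cm(R)}N\leq d$. To see this, take the $d$-th syzygy of $N$: by the depth lemma, at each prime $\mathfrak p$ its depth is at least $\min\{\dim R_{\mathfrak p},d\}=\dim R_{\mathfrak p}$, so it is maximal Cohen--Macaulay. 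Applying part (1) to any $M\in\db(R)$ then gives $\level^{\cm(R)}M\leq\max\{2,d+1\}$, so $\db(R)=\langle\cm(R)\rangle_{\max\{2,d+1\}}$. The statement about perfect complexes is a special case of this.

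The main subtlety is the identification, in part (1), of the resolution dimension of a complex with that of its total cohomology. This is exactly \cite[Corollary 3.3]{M}, which I would cite rather than reprove.
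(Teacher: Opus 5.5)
Your proposal is correct and takes the same route as the paper. The paper gives no argument beyond citing \cite[Theorem 4.1]{AAITY} and \cite[Corollary 3.3]{M}; your derivation of (2) applies (1) to the resolving subcategory $\cm(R)$ of $\mod R$, using that every finitely generated module has $\cm(R)$-dimension at most $d$ (verified via the depth lemma at each prime), and this is the natural way those citations combine.

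Two minor remarks. First, (1) is already stated in terms of $\bigoplus_i\H^iM$, so no comparison between the resolution dimension of a complex and that of its total cohomology is needed there. Second, your heuristic sketch of the argument in \cite{AAITY}, which groups even and odd brutal-truncation layers, does not literally work, because the even-degree and odd-degree parts of a complex are not subcomplexes. This is harmless only because you treat (1) as a citation.
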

\begin{rem}
The Gorenstein case of Lemma \ref{AAITY_4.1}(2) is the same as \cite[Theorem 3.10]{CKLS}.
\end{rem}
We are now ready to state and prove the main result of this paper.
\begin{thm}\label{main_thm}
Let $C$ be a semidualizing $R$-module.
Then the following two conditions are equivalent.
\begin{enumerate}[\rm(1)]
\item
The ring $R$ is Cohen--Macaulay, $C$ is a canonical module of $R$, and $\dim R$ is finite.
\item
The number $l=\sup\{\level_{\db(R)}^{\G_{C}(R)}P\mid P\in\perf(R)\}$ is finite.
\end{enumerate}
When these equivalent conditions are satisfied, there is an inequality $\dim R\leq l-1$.
\end{thm}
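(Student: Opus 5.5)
The implication (1)$\Rightarrow$(2) is immediate from the preliminaries. If $R$ is Cohen--Macaulay of finite dimension $d$ with canonical module $C$, then $\G_C(R)=\cm(R)$ by the remark above. Lemma \ref{AAITY_4.1}(2) then gives $l\leq\max\{2,d+1\}<\infty$. All the work is in (2)$\Rightarrow$(1) together with the bound $\dim R\leq l-1$. For that direction I plan to reduce to Theorem \ref{1}, applied not to $R$ but to the idealization $S=R\ltimes C$.

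Two classical facts drive the reduction. First, $S$ is Gorenstein if and only if $R$ is Cohen--Macaulay and $C$ is a canonical module (Reiten, Foxby; cf.\ \cite[Exercise 3.3.22]{BH} in the local case). Second, an $R$-module $M$ is Gorenstein $C$-projective if and only if $M$, viewed as an $S$-module via $S\to R$, is Gorenstein projective over $S$ (Holm--J\o rgensen). I would also use $\dim S=\dim R$.

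Now fix a perfect $S$-complex $Q$; the goal is to bound $\level^{\G(S)}_{\db(S)}Q$ uniformly, and then apply Theorem \ref{1} to $S$. I would compare $Q$ with $P=Q\otimes_S R$, which is a perfect $R$-complex. By hypothesis $P\in\langle\G_C(R)\rangle_l^{\db(R)}$. Restriction of scalars $\db(R)\to\db(S)$ is exact, so it carries this building of $P$ into $\db(S)$. Since $\G_C(R)$ lands in $\G(S)$, restriction gives $P\in\langle\G(S)\rangle_l^{\db(S)}$.

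The obstacle is recovering $Q$ itself, not just $P=Q\otimes_SR$. My first attempt would be to filter $Q$ by the ideal $C=0\ltimes C$ of $S$, which satisfies $C^2=0$. There is an exact sequence $0\to C\to S\to R\to0$, and tensoring with $Q$ gives a triangle $Q\otimes_S C\to Q\to Q\otimes_S R\to$. As an $S$-module, $C$ is killed by $C$, so $Q\otimes_S C\cong P\otimes_R C$, a bounded complex of modules in $\add C$. Since $C\in\G_C(R)$, this complex lies in $\G(S)$ as an $S$-complex.

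Two problems remain. Using finitely many terms would cost roughly $2l$ rather than $l$, which is harmless for finiteness but spoils the sharp bound. More importantly, a bounded complex of length $n$ in $\G(S)$ a priori has level up to $n$. So I would instead argue that $P\otimes_R C$ is obtained from $P$ by the exact functor $-\otimes^{\mathbf L}_R C$. This functor sends $\G_C(R)$-objects? It does not in general, so the fallback is cleaner: avoid reconstructing $Q$. I would apply Theorem \ref{1}'s proof mechanism directly, whose key step is that $S$ is Gorenstein once the $S$-module $k$ (or the Koszul complexes on systems of parameters localized at maximal ideals) has finite $\G(S)$-level. Concretely, I would localize at a maximal ideal $\m$ of $R$ and take $P=K(\bm x)$, the Koszul complex on a system of parameters of $R_\m$. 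Its restriction to $S$ then has $\G(S)$-level at most $l$, and I would invoke the CKLS lower bound (\cite{CKLS}, with \cite{AM,M}) showing that such a finite-level Koszul object forces Gorenstein projective dimension of a finite-length module to be finite. That gives $S_\m$ Gorenstein, and the lower bound itself yields $\dim R_\m\leq l-1$. The main risk lies here, in checking that CKLS's argument needs only levels of these specific complexes restricted from $R$. If it fails, I accept the weaker bound from the triangle argument and extract $\dim R\le l-1$ separately, via the known lower bound $\level^{\cm}K(\bm x)\geq\dim+1$ for Koszul complexes.
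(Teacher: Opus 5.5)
There is a genuine gap, and it sits exactly where you abandon the triangle argument. Your triangle $Q\otimes_S C\to Q\to Q\otimes_S R\rightsquigarrow$ is the right one. What it needs is a bound on the level of $Q\otimes_S C\cong P\otimes_R C$ that is uniform in $P=Q\otimes_S R\in\perf(R)$. You correctly note that $-\Lotimes_R C$ does not preserve $\G_C(R)$.

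The missing idea is to rewrite the tensor product as a $C$-dual. For a perfect complex one has $P\otimes_R C\cong\Hom_R(P^*,C)=\Rhom_R(P^*,C)$, where $P^*=\Hom_R(P,R)$ is again perfect, so $P^*\in\langle\G_C(R)\rangle_l^{\db(R)}$. The contravariant exact functor $\Rhom_R(-,C)$ sends each $G\in\G_C(R)$ to $G^\dagger\in\G_C(R)$, because $\Ext^{>0}_R(G,C)=0$. Hence it does not raise $\G_C(R)$-levels, and $P\otimes_R C\in\langle\G_C(R)\rangle_l^{\db(R)}$.

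Now restrict along $S\to R$ and use Holm--J\o rgensen. Both outer terms of the triangle lie in $\langle\G(S)\rangle_l^{\db(S)}$, so $\perf(S)\subseteq\langle\G(S)\rangle_{2l}^{\db(S)}$. The CKLS criterion then makes $S$ Gorenstein, and Reiten's theorem gives (1). The factor $2$ costs nothing. Once (1) holds, localize at a maximal ideal (this does not raise levels). The bound $\dim R\le l-1$ then follows on $R$ itself from $K(\bm x)\simeq R/(\bm x)$ and the lower bound $\level^{\G_C(R)}_{\db(R)}\ge\mathrm{G}_C\text{-}\dim+1$. This is exactly your final step, and it is what the paper does.

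Your fallback cannot replace this argument. A perfect $R$-complex is a bounded complex of modules in $\add R$, and $R\in\G_C(R)$ for every semidualizing $C$. So its restriction to $S$ has $\G(S)$-level bounded by its length, whatever the hypothesis; these complexes carry no information about $S$.

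Concretely, take $C=R$ with $R$ Cohen--Macaulay but not Gorenstein. Then $S=R\ltimes R$ is not Gorenstein. Yet $K(\bm x)$ restricted to $S$ has $\G(S)$-level at most $d+1$, and $R/(\bm x)$ has finite Gorenstein dimension over $S$. So finite Gorenstein dimension of a finite-length module does not force Gorensteinness, and the step ``that gives $S_\m$ Gorenstein'' fails.

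The CKLS criterion genuinely needs a uniform bound over all of $\perf(S)$, including complexes that do not come from $R$. That is precisely what the completed triangle argument supplies. Your last-resort plan also presupposes the triangle argument, which you left unfinished.
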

\begin{proof} 
It follows from Lemma \ref{AAITY_4.1}(2) that the first condition implies the second.
Assume $l$ is finite, and pick a maximal ideal $\m$.
As the localization functor $(-)_{\m}:\perf(R)\to\perf(R_{\m})$ is dense and exact, it holds that
$$
\begin{array}{l}
\sup\{\level_{\db(R_\m)}^{\G_{C_\m}(R_{\m})}P\mid P\in\perf(R_{\m})\}
= \sup\{\level_{\db(R_{\m})}^{\G_{C_{\m}}(R_{\m})}Q_{\m}\mid Q\in\perf(R)\}\\
\phantom{\sup\{\level_{\db(R_{\m})}^{\G_{C_{\m}}(R_{\m})}P\mid P\in\perf(R_{\m})\}}
\leq \sup\{\level_{\db(R)}^{\G_{C}(R)}P\mid P\in\perf(R)\} =l<\infty.
\end{array}
$$
Replacing $R,C$ with the localizations $R_\m,C_\m$ at each maximal ideal $\m$ of $R$ respectively, we may assume that $R$ is local.
By assumption, $\perf(R)$ is contained in $\langle\G_{C}(R)\rangle_{l}^{\db(R)}$.
We begin with proving that
\begin{equation}\label{15r}
\perf(R)\otimes_{R}C:=\{P\otimes_RC\mid P\in\perf(R)\}
\subseteq\langle\G_{C}(R)\rangle_{l}^{\db(R)}.
\end{equation}
Let $P\in\perf(R)$.
Then there is an isomorphism $P\otimes_{R}C\cong\Hom_{R}(P^{*},C)$, where $(-)^{*}=\Hom_{R}(-,R)$.
Since $P^{*}\in\perf(R)\subseteq\langle\G_{C}(R)\rangle_{l}^{\db(R)}$, we have
$P\otimes_{R}C=\Rhom_{R}(P^*,C)\in\langle\G_{C}(R)\rangle_{l}^{\db(R)}$.
Thus \eqref{15r} follows.

Let $S=R\ltimes C$ be the idealization of $C$ over $R$, and let $I=0\oplus C$ be the ideal of $S$.
Then $S$ is a commutative noetherian local ring, and $S/I$ is isomorphic to $R$.
Since $I^2=0$, the ideal $I$ is an $S/I$-module.
We can view $C$ as an $S/I$-module via the isomorphism $S/I\cong R$.
Note then that the $S/I$-modules $C$ and $I$ are isomorphic to each other.
We shall prove that
\begin{equation}\label{16r}
\perf(S)\subseteq\langle\G(S)\rangle_{2l}^{\db(S)}.
\end{equation}
Let $P\in\perf(S)$.
Applying the functor $P\otimes_{S}-$ to the exact sequence $0\to I\to S\to S/I\to 0$ of $S$-modules, we get an exact sequence $0\to P\otimes_{S}I\to P\to P/IP\to 0$ of $S$-complexes, which induces an exact triangle
\begin{equation}\label{10r}
P\otimes_{S}I\to P\to P/IP \rightsquigarrow
\end{equation}
in $\db(S)$.
It is clear that $P/IP\in\perf(S/I)\subseteq\langle\G_{C}(S/I)\rangle_{l}^{\db(S/I)}$.
There are isomorphisms
$$
P\otimes_{S}I\cong P\otimes_{S}S/I\otimes_{S/I}I\cong P/IP\otimes_{S/I}I\cong P/IP\otimes_{S/I}C
$$
of $S/I$-complexes.
Using \eqref{15r}, we obtain $P/IP\otimes_{S/I}C\in\perf(S/I)\otimes_{S/I}C\subseteq\langle\G_{C}(S/I)\rangle_{l}^{\db(S/I)}$.
Hence the $S/I$-complex $P\otimes_SI$ belongs to $\langle\G_{C}(S/I)\rangle_{l}^{\db(S/I)}$.
Consider the exact functor
$$
\Phi =-\Lotimes_{S/I}M : \db(S/I)\to\db(S),
$$ 
where $M={}_{S/I}(S/I)_S$ is the $(S/I,S)$-bimodule $S/I$.
By \cite[Proposition 2.13(2)]{HJ} we see that $\Phi(\G_{C}(S/I))$ is contained in $\G(S)$, so that $\Phi(\langle\G_{C}(S/I)\rangle_{l}^{\db(S/I)})$ is contained in $\langle\G(S)\rangle_{l}^{\db(S)}$.
Hence both of the $S$-complexes $P\otimes_{S}I$ and $P/IP$ belong to $\langle\G(S)\rangle_{l}^{\db(S)}$.
It follows from \eqref{10r} that $P\in\langle\G(S)\rangle_{2l}^{\db(S)}$.
Thus \eqref{16r} follows.

By \cite[Theorem 2.6]{CKLS} and \eqref{16r}, the ring $S$ is Gorenstein.
The proof of \cite[Theorem (7)]{Rei} shows that $R$ is Cohen--Macaulay and $C$ is a canonical module of $R$.
Let $\bm{x}=x_{1},\ldots,x_{d}$ be a system of parameters of $R$, and let $K=\K(\bm{x},R)$ be the Koszul complex of $\bm{x}$.
By \cite[page 68(8)]{G} and \cite[Corollary 5.5(3)]{M}, one has
$l\geq\level_{\db(R)}^{\G_{C}(R)}K=\level_{\db(R)}^{\G_{C}(R)}R/(\bm{x})\geq\mathrm{G}_C\operatorname{-dim}_R R/(\bm{x})+1=d+1$.
We are done.
\end{proof}
As an immediate consequence of Theorem \ref{main_thm}, we get a characterization of the Cohen--Macaulay rings $R$ with canonical modules in terms of the finiteness of the $\G_{C}(R)$-levels of perfect $R$-complexes.
\begin{cor}
The following two conditions are equivalent to each other.
\begin{enumerate}[\rm(1)]
\item
The ring $R$ is Cohen--Macaulay, admits a canonical module, and has finite Krull dimension.
\item
The number $\inf\{\sup\{\level_{\db(R)}^{\G_C(R)} P\mid P\in\perf(R)\}\mid C \text{ is a semidualizing $R$-module}\}$ is finite.
\end{enumerate}
\end{cor}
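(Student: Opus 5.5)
The corollary follows directly from Theorem \ref{main_thm}, applied to a suitable semidualizing module in each direction.

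For (1)$\Rightarrow$(2), assume $R$ is Cohen--Macaulay of finite Krull dimension $d$ and has a canonical module $\omega$. By the remark following the definition of Gorenstein $C$-projective modules, $\omega$ is semidualizing and $\G_\omega(R)=\cm(R)$. Lemma \ref{AAITY_4.1}(2) then gives $\sup\{\level_{\db(R)}^{\G_\omega(R)}P\mid P\in\perf(R)\}\leq\max\{2,d+1\}$. The infimum over all semidualizing $C$ is bounded above by the value at $C=\omega$, so it is finite. Alternatively, one can apply (1)$\Rightarrow$(2) of Theorem \ref{main_thm} with $C=\omega$.

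For (2)$\Rightarrow$(1), suppose the infimum is finite. The sets involved are sets of values in $\mathbb{N}\cup\{\infty\}$, and an infimum of such values is finite only if at least one of them is finite. Hence there is a semidualizing $R$-module $C$ with $\sup\{\level_{\db(R)}^{\G_C(R)}P\mid P\in\perf(R)\}<\infty$. Theorem \ref{main_thm}, (2)$\Rightarrow$(1), applied to this $C$ shows three things: $R$ is Cohen--Macaulay, $\dim R$ is finite, and $C$ is a canonical module of $R$. In particular $R$ admits a canonical module.

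Neither direction has a real obstacle. The only point to check is the convention for the infimum: it is taken over values in $\mathbb{N}\cup\{\infty\}$, and the class of semidualizing modules is nonempty since $R$ itself is semidualizing. Under this convention a finite infimum is attained by some $C$, which is what (2)$\Rightarrow$(1) uses.
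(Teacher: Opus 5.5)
Your proposal is correct and matches the paper's approach: the paper states the corollary as an immediate consequence of Theorem \ref{main_thm}, applied with $C=\omega$ for (1)$\Rightarrow$(2), and for (2)$\Rightarrow$(1) to a semidualizing $C$ whose supremum of levels is finite. Your remark that a finite infimum of values in $\mathbb{N}\cup\{\infty\}$ is attained is exactly the small point that justifies the second direction.
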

Letting $C=R$ in Theorem \ref{main_thm}, we recover the following result proved in \cite[Theorems 2.10 and 3.10]{CKLS}.
\begin{cor}[Christensen--Kekkou--Lyle--Soto Levins]
The following conditions are equivalent.
\begin{enumerate}[\rm(1)]
\item
The ring $R$ is Gorenstein of finite Krull dimension.
\item
The number $l=\sup\{\level_{\db(R)}^{\G(R)}P\mid P\in\perf(R)\}$ is finite.
\end{enumerate}
When these equivalent conditions are satisfied, the Krull dimension of $R$ is at most $l-1$.
\end{cor}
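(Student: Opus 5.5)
The statement at the end is the last Corollary (Christensen--Kekkou--Lyle--Soto Levins). The plan is simply to specialize Theorem \ref{main_thm} to the semidualizing module $C=R$.

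First, $R$ is a semidualizing $R$-module, since $\End_R(R)\cong R$ canonically and $\Ext^i_R(R,R)=0$ for $i>0$. Moreover $\G_R(R)=\G(R)$ by the Remark above. Hence condition (2) of Theorem \ref{main_thm} with $C=R$ is literally condition (2) of the corollary, and the two numbers $l$ coincide.

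It remains to check that condition (1) of Theorem \ref{main_thm} with $C=R$ is equivalent to condition (1) of the corollary. In other words, we must show that $R$ is Gorenstein of finite Krull dimension if and only if $R$ is Cohen--Macaulay of finite Krull dimension with $R$ a canonical module of itself. By \cite[Definition 3.3.16]{BH}, a finitely generated module $\omega$ over a Cohen--Macaulay ring is canonical when $\omega_\m$ is a canonical module of $R_\m$ for every maximal ideal $\m$. For a Cohen--Macaulay local ring, the ring itself is a canonical module precisely when it has finite injective dimension, i.e.\ when it is Gorenstein. Since Gorenstein rings are Cohen--Macaulay and Gorensteinness is checked at maximal ideals, both directions follow. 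This step is the only point requiring any care, and it is just a matter of unwinding the definitions.

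Finally, the inequality $\dim R\leq l-1$ is inherited verbatim from the last assertion of Theorem \ref{main_thm}.
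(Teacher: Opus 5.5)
Your proposal is correct and is exactly the paper's argument: the paper obtains this corollary by setting $C=R$ in Theorem~\ref{main_thm}. Your additional checks make explicit what the paper leaves implicit, namely that $R$ is semidualizing with $\G_R(R)=\G(R)$, and that a ring is Cohen--Macaulay with $R$ as its own canonical module exactly when it is Gorenstein.
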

\begin{ac}
The authors thank Takuma Aihara for helpful comments on trivial extensions.
\end{ac}

\end{document}